\theoremstyle{plain}
\newtheorem{theorem}{Theorem}[section]
\newtheorem{lemma}[theorem]{Lemma}
\newtheorem{corollary}[theorem]{Corollary}
\theoremstyle{definition}
\newtheorem*{definition}{Definition}
\newtheorem{Ex}[theorem]{Example}
\theoremstyle{remark}
\def \Q {{\mathbb Q}}
\def \Z {{\mathbb Z}}
\def \K {{\mathbb K}}
\def \0 {{\mathbf 0}}
\date{}
\title{Irreducibility of integer-valued polynomials I}
\author{Devendra Prasad\\
   devendraprasad@iisertirupati.ac.in\\
   Department of Mathematics\\ IISER-Tirupati\\ Tirupati, Andhra Pradesh\\
 India, 517507\\  
  }
\begin{document}
\maketitle


\begin{abstract} Let $S \subset R$ be an arbitrary subset of a unique factorization domain $R$ and $\K$ be the field of fractions of $R$. The ring of integer-valued polynomials over $S$ is the set $\mathrm{Int}(S,R)= \{ f \in \K[x]: f(a) \in R\ \forall\ a \in S \}.$ This article is an effort to study the  irreducibility of integer-valued polynomials over arbitrary subsets of a unique  factorization domain. We   give a method to construct    special kinds of   sequences, which we  call  $d$-sequences. We then use  these sequences   to obtain a criteria for the irreducibility of the polynomials in $\mathrm{Int}(S,R).$ In some special cases, we explicitly construct these sequences and use these sequences to check the irreducibility of some polynomials in $\mathrm{Int}(S,R).$  At the end,  we suggest a generalization of our results to an arbitrary subset of a Dedekind domain.
 
\end{abstract}

\section{Introduction}

For a given subset $S$ of a domain $R$ the set of polynomials
$$ \mathrm{Int}(S,R  )=    \{f \in \K[x]: f(S) \subset R \}  ,   $$
 where $\K$ is the   field of fractions of $R$, forms a   ring. This ring is termed as the ring of integer-valued polynomials over $S$. A general reference for this topic could be Cahen and Chabert  \cite{Cahen} and some interesting results on the topic can be found in  \cite{Whatyoushouldknowaboutintegervaluedpolynomials} \cite{Chabert} \cite{Cahenfactorial}   \cite{fdnnfrisch2} and \cite{fdnnwerner3}. This ring is very rich in properties and is helpful in constructing examples/counterexamples in commutative algebra. In the previous few decades this ring   attracted the attention of several  mathematicians  and now the study of this ring has become a major field of specialization. In the case when $S=R$, we just write $ \mathrm{Int}(R)  $ instead of $ \mathrm{Int}(R,R ) .$

 \medskip

 In ring theory, one of the most fascinating concepts is irreducibility. The irreducibility of  polynomials has a venerable history  but in the  case of the ring of integer-valued polynomials, irreducibility has not been explored that much. 
  Only some methods are known so far and they are only for particular rings. For the  interested   readers, we give a short summary of articles dealing with the irreducibility of integer-valued polynomials.

 \medskip

  In 2005, Chapman and McClain \cite{Chapman} gave a criteria for testing the irreducibility of polynomials in $ \mathrm{Int}(S,R  )$ where $R$ is a unique factorization domain.  Peruginelli \cite{Peruginelli'sapplication} gave a computational method to test the irreducibility of polynomials in    $ \mathrm{Int}(\Z)  $ for some special polynomials of $\Q[x]$.   Antoniou, Nakato and   Rissner \cite{snakirr} introduced `table method'   to check   the irreducibility of polynomials in    $ \mathrm{Int}(\Z).$   For a summary of work on the irreducibility of integer-valued polynomials we refer to Prasad, Rajkumar and Reddy  \cite{prasadsurvey}, where a whole section is devoted to the irreducibility of integer-valued polynomials.

 \medskip

 The organization of the paper is as follows. In section \ref{secp}, we present some preliminaries and fix   notations for the whole paper. In section \ref{secd}, we introduce $d$-sequences and give some examples. In   section \ref{secirr},  we 
 obtain a criteria
  for the irreducibility  of polynomials in $ \mathrm{Int}(S,R)$  in the case when $R$ is a unique factorization domain and $S$ is an arbitrary subset. We give some examples to explain how  sometimes $d$-sequences can be very helpful and viable in testing the irreducibility of polynomials in $\mathrm{Int}(S,R).$

  \medskip

   As per our knowledge there is no criteria  known till date to test the irreducibility of polynomials in $ \mathrm{Int}(S,R)$    when $R$ is a Dedekind domain and $S$ is an arbitrary subset of $R$. In   section \ref{secirrgen}, we suggest a generalization of our results to get a criteria in this case for the first time. Finally, we show  how sometimes our results remain valid for the ring of integer-valued polynomials over an arbitrary subset of a domain.
  
 \section{ Preliminaries and notations}\label{secp}
  
   We start this section by fixing a few notations. Throughout the article $R$ denotes a unique factorization domain (UFD) with the field of fractions $\K$ and $S$ denotes an arbitrary subset of $R$. For a  polynomial  $f \in \K[x],\ g$ denotes the unique polynomial in $R[x]$ such that $f= \tfrac{g}{d}$, where $d \in R$ is also unique. Recall that an element $u$ of a ring $A$  is   said to be a {\em  unit} if  we can find an element $v \in A$ such that $uv=1.$  
   A non-zero non-unit element  $\alpha $   of a ring  $A$ is said to be an {\em irreducible element} if it is not a product of two non-units. Equivalently, if $$\alpha =\alpha_1 \alpha_2 $$ for $\alpha_1 ,\alpha_2 \in A$, then either $\alpha_1$ is a unit or  $\alpha_2$ is a unit.
      For brevity, we just   call `irreducible' instead of an irreducible element, where the ring automatically comes from the context.

   \medskip

  Given a subset $S \subset R$ and a  polynomial  $f= \tfrac{g}{d} \in \mathrm{Int}(S,R  )   $, consider the following subset of $R$
   
   $$T_f =\{ f(a) = \tfrac{g(a)}{d}: a \in S \}.$$
   
  If each element of $T_f$ is a multiple of some non-unit element $d' \in R$, then the polynomial cannot be irreducible since we have the proper factorization
   
   $$f=d' . \dfrac{f}{d'}$$
    in  $\mathrm{Int}(S,R  ).  $ In order to test the irreducibility of a polynomial $f$, we must assume that    each element of $T_f$ is not a multiple of some non-unit element  of $ R.$ Such a polynomial is said to be {\em `image primitive' }. Throughout the article,   a polynomial in $\mathrm{Int}(S,R  )$ refers to an  image primitive polynomial in  $\mathrm{Int}(S,R  ).$ Also, for brevity, an irreducible  polynomial   refers  to an irreducible polynomial in Int$(S,R)$, where $S$ and $R$  automatically come from the context. We denote the highest power of a prime ideal $P$ dividing an ideal $I$ by $w_P(I).$ For instance, $w_2(12)=2^2.$

 \medskip

  \section{$d$-sequences} \label{secd}

     In this section, we construct    special kinds of sequences called  {\em $d$-sequences}. Before introducing these kinds of sequences we need the notion of {\em  $\pi$-sequences}.  We know that the ideal generated by some irreducible element $\pi \in R$ is always a prime ideal, hence the ring $R_{(\pi)}$ is a local ring.

 \medskip

 A given subset $S \subset R$  can also be seen as a subset of the local ring $R_{(\pi)}$  for any   prime ideal $(\pi) \subset R.$ With this assumption we give the definition of $\pi$-sequences.

     \begin{definition} A sequence $\{ u_i \}_{i \geq 0}$ of elements of  $S \subset R$ is said to be a $\pi$-sequence if   for each   $k>0$,    $u_k \in S$   satisfies  
 
 $$\tfrac{(x-u_0)\ldots (x- u_{k-1})}{(u_k-u_0) \ldots (u_k-u_{k-1})} \in \mathrm{Int}(S,R_{(\pi)} ).$$


\end{definition} 
  
  \medskip

   In this way we get a sequence of elements $\{ u_i \}_{i \geq 0}$ in $S$ with arbitrary $u_0$. These kinds of sequences were also studied by Bhargava \cite{Bhar1} in a slightly different way to construct his generalized factorials. With this definition in hand, we define  $d$-sequences as follows.  

 \medskip


    \begin{definition} For a given element $d \in R$,   
     let $\pi_1, \pi_2,\ldots, \pi_r$
 be all the   irreducibles   of $R$ dividing  $d$. Let for   $1 \leq j \leq r$, $\{ u_{ij} \}_{i \geq 0} $ be a $\pi_j$-sequence of $S$ and $\pi_j^{e_{kj} }$ be   $(u_{kj}-u_{0j})\ldots (u_{kj}- u_{k-1j})$ viewed as  a member of  the ring $R_{(\pi_j)}$. Then a $d$-sequence $ \{ x_i \}_{0 \leq i \leq k}$   of $S$  of length $k$  is a solution to the following congruences
 
   
   \begin{equation}\label{crt}
   x_i \equiv u_{ij} \mod \pi_j^{e_{kj}+1}\ \forall\ 1 \leq j \leq r,
   \end{equation}

  where  $0 \leq i \leq k.$ 
\end{definition}

   By Chinese remainder theorem we get infinitely many solutions of Eq. (\ref{crt}). We fix a solution of Eq. (\ref{crt}) for each $i$   and get a sequence $a_0, a_1, \ldots, a_k$ of $k+1$ elements. 
    Such a sequence may be inside $S$ or may not be. We just call `a $d$-sequence' if  the subset $S$ is clear from the context.   This sequence is important throughout our study. Before proceeding it is apropos to give a few examples of $d$-sequences.
  
  \begin{Ex} In the case when $R=\Z$ and $S =a \Z+b$  where $a,b \in \Z,$ the sequence $b,a+b,\ldots, ak+b$ is  a $d$-sequence of length $k$ for every $d \in \Z$. This is because the sequence $b,a+b,\ldots$ is  a $p$-sequence for every prime number $p$. 
  
  In fact, any $k +1$ consecutive terms of $S$ form a  $d$-sequence of length $k$ for every $d \in \Z$.

  \end{Ex} 
  
  \begin{Ex} In the case when $R=\Z$ and $S $ is the set of square numbers including zero, the first $k +1$ consecutive terms of $S$ starting from zero form a  $d$-sequence of length $k$ for every $d \in \Z$. This can be shown by the same reasoning as in the previous example.
  
  \end{Ex}

  Recall that, for a given subset $S \subset R,$ the {\em fixed divisor } of a polynomial $f \in R[x]$ over $S$ is the greatest common divisor of the values taken by $f$ over $S$. This quantity is denoted by $d(S,f).$ Thus,
  
  $$d(S,f)=  \gcd \{ f(a): a \in S    \} .$$
  
  Classically, this quantity was applied to the problems of the ring of integer-valued polynomials only but recently mathematicians used this quantity to generalize some number theoretic problems (see for  instance, \cite{Prasad2019}, \cite{vajaituapp} and \cite{vajaituapp1}) also.
 For some latest results on fixed divisors we refer to Semwal, Rajkumar and Reddy \cite{Devendra} and for a solid summary of literature on fixed divisors we highly recommend   Prasad, Rajkumar and Reddy \cite{prasadsurvey} (see \cite{Devendrafixed} also).

  \medskip

   A sequence of distinct elements $\{ a_i \}_{i \geq 0}$ of $S$ is said to be  a {\em  fixed divisor sequence} (see Prasad, Rajkumar and Reddy \cite{prasadsurvey}) if  for every $k >0,\ \exists\ l_k \in \Z,$  such that for every polynomial $f$ of degree $k$  $$d(S,f)=(f(a_0),f(a_1), \ldots, f(a_{l_k}))
,$$  
  and no proper subset of $\{a_0,a_1, \ldots, a_{l_k} \}$ 
  determines the fixed divisor of all the degree $k$ polynomials. For instance, the sequence $0,1,2, \ldots$ is a fixed divisor sequence in $\Z$ with $l_k=k\ \forall\ k >0.$

  \begin{Ex} Let $S $ be a subset of $R$ with a fixed divisor sequence $\{ a_i \}_{i \geq 0}$. If for every positive integer $k, l_k=k$ then $ a_0,a_1, \ldots, a_k$ is a $d$-sequence of length $k$ for every $d \in R$ since the sequence $ a_0,a_1, \ldots $ is a $\pi$-sequence for all irreducible $\pi \in R$.

  \end{Ex}
  
  In all the   examples given so far, $d$-sequences always belong to the set. Now we give an example where this is not the case.

  \begin{Ex} Let $S
   $ be the set of prime numbers in $\Z$ and we wish to construct a $6$-sequence of   length four.   
We have  $2,3,5,7,17$ and $2,3,7,5,19$ as    a $2$-sequence and a $3$-sequence respectively of length four. Now we have the following factorization
$$(17-2)(17-3)(17-5)(17-7)=2^4 .1575$$
and
$$(19-2)(19-3)(19-7)(19-5)=3^1.15232.$$
 
 The element 1575 is invertible in $\Z_{(2)}$  and so is 15232 in  $\Z_{(3)}.$ Hence, a first term of a $6$-sequence is a solution of the congruences
 $$x \equiv 2\ ( \mathrm{mod}\ 32)$$ and
$$x \equiv 2\ ( \mathrm{mod}\ 9).$$


A solution to the above congruence is $a_0=290.$ Similarly at the last (fifth) step we solve
the congruences
 $$x \equiv 17\ ( \mathrm{mod}\ 32)$$ and
$$x \equiv 19\ ( \mathrm{mod}\ 9),$$
 
to get a solution  $a_4=145.$
 The readers can compute the other terms   to get  $ 290,291, 133, 445, 145  $ as a $6$-sequence of length four in which all the elements are not members of $S$.
 \end{Ex}

  \medskip

  \section{Irreducibility of integer-valued polynomials} \label{secirr}
  
Before coming to the main result, we prove an important lemma which is helpful in proving our main result.
  \begin{lemma}\label{ivpcr} Let $a_0, a_1, \ldots, a_k$ be a $d$-sequence of length $k$ for some $d \in R$ and a given positive integer $k$. 
   Then, for any  polynomial $ f' = \tfrac{g'}{d'} \in \K[x],$ where $d' \mid d$,  of degree $k' \leq k$ the following holds
  
    $$f' \in \mathrm{Int}(S,R  )   \Leftrightarrow f'(a_i) \in R\ \forall\ 0 \leq i \leq k'.$$
  \end{lemma}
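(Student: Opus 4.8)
The plan is to pass from the global ring $R$ to the local rings $R_{(\pi_j)}$ attached to the finitely many irreducibles $\pi_1,\dots,\pi_r$ dividing $d$, prove the equivalence one prime at a time, and reassemble. Write $f'=\tfrac{g'}{d'}$ with $g'\in R[x]$ primitive and $d'\mid d$. Since $R=\bigcap_\pi R_{(\pi)}$ over all irreducibles $\pi$, an element of $\K$ lies in $R$ exactly when it lies in every $R_{(\pi)}$; and for any $\pi\nmid d'$ the denominator $d'$ is a unit in $R_{(\pi)}$, so both $f'\in\mathrm{Int}(S,R_{(\pi)})$ and $f'(a_i)\in R_{(\pi)}$ hold automatically. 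Hence both sides of the claimed equivalence reduce to the primes $\pi_j\mid d'$ (a subset of $\pi_1,\dots,\pi_r$), and it suffices to show, for each such $j$, that $f'\in\mathrm{Int}(S,R_{(\pi_j)})$ if and only if $f'(a_i)\in R_{(\pi_j)}$ for $0\le i\le k'$.

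Fix such a $j$ and abbreviate $\pi=\pi_j$, $u_i=u_{ij}$, and let $v$ denote the $\pi$-adic valuation on $R_{(\pi)}$. The $\pi$-sequence supplies the polynomials $b_n(x)=\tfrac{(x-u_0)\cdots(x-u_{n-1})}{(u_n-u_0)\cdots(u_n-u_{n-1})}\in\mathrm{Int}(S,R_{(\pi)})$, which satisfy $b_n(u_m)=0$ for $m<n$ and $b_n(u_n)=1$. I would first record the standard regular-basis fact: writing $f'=\sum_{n=0}^{k'}c_nb_n$ with $c_n\in\K$, the membership $f'\in\mathrm{Int}(S,R_{(\pi)})$ is equivalent to $c_n\in R_{(\pi)}$ for all $n$. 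This follows because the evaluation matrix $(b_n(u_m))_{0\le m,n\le k'}$ is lower unitriangular with entries in $R_{(\pi)}$ (the $u_m$ lie in $S$), so the relations $f'(u_m)=\sum_n c_n b_n(u_m)$ invert over $R_{(\pi)}$.

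The decisive step, and the only place the $d$-sequence congruence enters, is to replace the points $u_i\in S$ by the points $a_i$, which need not lie in $S$. For $n\le k'\le k$ the denominator of $b_n$ has valuation $e_{nj}\le e_{kj}$ (the generalized factorials are nondecreasing in $\pi$-valuation), while the defining congruence $a_i\equiv u_i \pmod{\pi^{e_{kj}+1}}$ gives $\prod_{l<n}(a_i-u_l)\equiv\prod_{l<n}(u_i-u_l)\pmod{\pi^{e_{kj}+1}}$ in the numerator; hence $v\!\left(b_n(a_i)-b_n(u_i)\right)\ge (e_{kj}+1)-e_{nj}\ge 1$. Consequently every $b_n(a_i)$ lies in $R_{(\pi)}$ and is congruent mod $\pi$ to $b_n(u_i)$, so the matrix $M=(b_n(a_i))_{0\le i,n\le k'}$ reduces mod $\pi$ to a lower unitriangular matrix; thus $\det M$ is a unit of $R_{(\pi)}$ and $M^{-1}$ has entries in $R_{(\pi)}$. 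From the two relations $\big(f'(a_i)\big)_i=M\,(c_n)_n$ and $(c_n)_n=M^{-1}\big(f'(a_i)\big)_i$ the equivalence $[\,c_n\in R_{(\pi)}\ \forall n\,]\Leftrightarrow[\,f'(a_i)\in R_{(\pi)}\ \forall i\,]$ follows in both directions at once, which together with the previous paragraph is exactly the local statement.

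I expect the main obstacle to be precisely this transfer from $S$ to the off-$S$ points $a_i$: because the $d$-sequence need not lie in $S$ (as the last example of Section~\ref{secd} shows), neither implication is immediate, and everything hinges on showing that the evaluation matrix stays invertible over each $R_{(\pi_j)}$ after the substitution. The modulus $\pi_j^{e_{kj}+1}$ is calibrated exactly for this: it is large enough, relative to every factorial exponent $e_{nj}$ with $n\le k$, to force $b_n(a_i)\equiv b_n(u_{ij})\pmod{\pi_j}$ uniformly, which is the technical heart of the proof.
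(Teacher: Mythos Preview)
Your proof is correct and follows the same strategy as the paper's: localize at each $\pi_j\mid d$, expand $f'$ in a Newton-type basis, and use the defining congruence $a_i\equiv u_{ij}\pmod{\pi_j^{e_{kj}+1}}$ (together with the monotonicity $e_{nj}\le e_{kj}$) to transfer between evaluation at the $\pi$-sequence points $u_{ij}\in S$ and at the $d$-sequence points $a_i$. The only difference is packaging---the paper expands in the falling products $(x-a_0)\cdots(x-a_{i-1})/d'$ centered at the $a_i$'s and argues via a chain of valuation equalities, whereas you expand in the normalized basis $b_n$ centered at the $u_i$'s and phrase the transfer as invertibility of the evaluation matrix over $R_{(\pi_j)}$; the underlying content is the same.
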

  
  \begin{proof} Observe that, for any $\pi \mid d$,  $w_{\pi}((a_i-a_0)(a_i-a_1) \ldots (a_i-a_{i-1})) = w_{\pi}((b_i-b_0)(b_i-b_1) \ldots (b_i-b_{i-1}))\ \forall\ 1 \leq i \leq k$, where $b_0,b_1, \ldots$ is a $\pi$-sequence in $S$. Consider   the following representation
  
  \begin{equation}\label{repre}
  f' = \tfrac{g'}{d'} = \sum_{i=0}^{k'} c_i  \tfrac{(x-a_0)(x-a_1) \ldots (x-a_{i-1})}{d'},
  \end{equation}
  
 where $c_i \in R\ \forall\ 0 \leq i \leq k'.$
 Now we have the following observation
 
 \begin{equation*} 
\begin{split}
f'(a_i) \in R\ \forall\ 0 \leq i \leq k' & \Leftrightarrow c_i\tfrac{(a_i-a_0)(a_i-a_1) \ldots (a_i-a_{i-1})}{d'} \in  R\ \forall\ 0 \leq i \leq k',\\
 & \Leftrightarrow c_i\tfrac{(b_i-b_0)(b_i-b_1) \ldots (b_i-b_{i-1})}{d'} \in R\   \forall\ 0 \leq i \leq k' .\\
  \end{split}
\end{equation*}
  
  By the definition of $\pi$-sequences the above holds iff
    for any arbitrary element \ $\alpha\ \in S, $  
    $$  \ c_i\tfrac{(\alpha-b_0)(\alpha-b_1) \ldots (\alpha-b_{i-1})}{d'} \in R\   \forall\ 0 \leq i \leq k',$$
 which is true iff  $$  \ c_i\tfrac{(\alpha-a_0)(\alpha-a_1) \ldots (\alpha-a_{i-1})}{d'} \in R\   \forall\ 0 \leq i \leq k',$$
 for any arbitrary element  $\alpha\ \in S.$  i.e., if and only if $f'(\alpha) \in R\ \forall\ \alpha \in S$ or equivalently $f' \in \mathrm{Int}(S,R  ) . $


  \end{proof}

  \medskip

   Similar to the case of $\Z,$ square-free elements can be defined in  any unique factorization domain $R$. When $d$ is a square-free element of $R,$ Lemma \ref{ivpcr} can be improved as follows. 
    \begin{lemma}\label{l1}  Let $f = \tfrac{g}{d} \in \K[x] $ be  a polynomial  of degree $k$ and $a_0, a_1, \ldots, a_k$ be a $d$-sequence, where $d$ 
     is a square-free  element of $R$. Let $s_{\pi}$ be the number of elements of $ S \subset R$ which are not congruent to each other (modulo $\pi$) for an irreducible $\pi$, then
    $$f \in \mathrm{Int}(S,R  )   \Leftrightarrow f(a_i) \in R\ \forall\ 0 \leq i \leq \min(s_{\pi} , k),$$
  for every divisor $\pi$ of $d$.
  
  
  \end{lemma}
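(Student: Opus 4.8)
The plan is to localize at each irreducible divisor of $d$ and to exploit that, because $d$ is square-free, every such $\pi$ divides $d$ to the first power only. First I would record the standard reduction $f \in \mathrm{Int}(S,R) \iff f \in \mathrm{Int}(S,R_{(\pi)})$ for every irreducible $\pi \mid d$; the primes $\pi \nmid d$ are irrelevant, since $d$ is a unit in $R_{(\pi)}$. Fixing such a $\pi$, I would reuse the triangular expansion from the proof of Lemma \ref{ivpcr},
\[
f = \frac{g}{d} = \sum_{i=0}^{k} c_i\,\frac{(x-a_0)(x-a_1)\cdots(x-a_{i-1})}{d},\qquad c_i \in R,
\]
together with the valuation identity $w_\pi\big((a_i-a_0)\cdots(a_i-a_{i-1})\big) = w_\pi\big((b_i-b_0)\cdots(b_i-b_{i-1})\big)$ established there, where $\{b_i\}$ is the underlying $\pi$-sequence. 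Evaluating successively at $a_0,a_1,\dots$ as in Lemma \ref{ivpcr} gives, for any initial segment of indices, the equivalence between $f(a_i) \in R_{(\pi)}$ holding on that segment and each term $c_i(a_i-a_0)\cdots(a_i-a_{i-1})/d$ lying in $R_{(\pi)}$, i.e.\ $\pi \mid c_i(a_i-a_0)\cdots(a_i-a_{i-1})$ (recall $w_\pi(d)=\pi$ as $d$ is square-free).

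The crux is to show this last divisibility is automatic once $i \ge s_\pi$, so that only the indices $0 \le i \le \min(s_\pi,k)$ carry information. For this I would use that a $\pi$-sequence is precisely a $\pi$-ordering: by definition $b_i$ minimizes $w_\pi\big((\alpha-b_0)\cdots(\alpha-b_{i-1})\big)$ over $\alpha \in S$. Writing $\pi^{e_i} = w_\pi\big((b_i-b_0)\cdots(b_i-b_{i-1})\big)$, I claim $e_i = 0$ for $i < s_\pi$ and $e_i \ge 1$ for $i \ge s_\pi$. Indeed, as long as $b_0,\dots,b_{i-1}$ miss some residue class of $S$ modulo $\pi$, any $\alpha\in S$ in that class makes the product a $\pi$-unit, so minimality forces $e_i=0$; and $e_i=0$ means $b_i$ is incongruent mod $\pi$ to all earlier terms, hence occupies a fresh class. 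Thus the first $s_\pi$ terms exhaust all $s_\pi$ classes, and for every $i \ge s_\pi$ each $\alpha\in S$ — in particular $b_i$ — is congruent mod $\pi$ to some earlier $b_l$, forcing $e_i \ge 1$. Transporting this to the $a$'s via the valuation identity above yields $\pi \mid (a_i-a_0)\cdots(a_i-a_{i-1})$ for all $s_\pi \le i \le k$, which makes the term condition there vacuous.

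Combining these, for each fixed $\pi \mid d$ one obtains $f \in \mathrm{Int}(S,R_{(\pi)}) \iff f(a_i)\in R_{(\pi)}$ for $0 \le i \le \min(s_\pi,k)$. The forward implication of the lemma is then immediate from Lemma \ref{ivpcr}, which already yields $f(a_i)\in R$ for every $i \le k$. For the converse, the hypothesis gives $f(a_i)\in R \subseteq R_{(\pi)}$ on the range $0 \le i \le \min(s_\pi,k)$ for each $\pi \mid d$, whence $f \in \mathrm{Int}(S,R_{(\pi)})$ for every such $\pi$ and therefore $f \in \mathrm{Int}(S,R)$. I expect the main obstacle to be the claim $e_i \ge 1$ for $i \ge s_\pi$: one must invoke the minimality built into the definition of a $\pi$-sequence to guarantee that the first $s_\pi$ terms genuinely cover all residue classes, and then carry the valuation statement from the in-$S$ terms $b_i$ over to the possibly-outside-$S$ terms $a_i$ through the congruences defining the $d$-sequence.
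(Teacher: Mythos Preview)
Your proof is correct and rests on the same core observation as the paper's: the first $s_\pi$ terms of a $\pi$-sequence exhaust the residue classes of $S$ modulo $\pi$, so once $d$ is square-free only those indices carry information at the prime $\pi$. The packaging differs, however. The paper argues more directly: since $\pi \mid g(\alpha)$ depends only on $\alpha \bmod \pi$, one has $h/\pi \in \mathrm{Int}(S,R) \iff (h/\pi)(S_\pi) \subset R$, where $S_\pi$ is a set of residue-class representatives in $S$; it then notes that the first $s_\pi$ entries of the $d$-sequence are congruent to the elements of $S_\pi$ and finishes with a Chinese-remainder step to recombine the primes dividing $d$. You instead stay inside the triangular expansion of Lemma~\ref{ivpcr} and show that the term $c_i(a_i-a_0)\cdots(a_i-a_{i-1})/d$ is automatically $\pi$-integral for $i \ge s_\pi$ because $e_i \ge 1$ there. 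Your route has the virtue of explicitly justifying, via the minimality built into the $\pi$-ordering, why the first $s_\pi$ terms genuinely cover all residue classes---a point the paper only asserts---while the paper's route is shorter and avoids reopening the expansion.
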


  \begin{proof} By Lemma \ref{ivpcr} we have
  
   $$f \in \mathrm{Int}(S,R  )   \Leftrightarrow f(a_i) \in R\ \forall\ 0 \leq i \leq k.$$
  
If $s_{\pi}>k$ for a given irreducible $\pi$ dividing $d$, then we are done. Hence we assume $s_{\pi}<k.$ Let $S_{\pi}$ be the set of elements of $ S  $ which are not congruent to each other (modulo $\pi$) for a given irreducible $\pi$. Then the elements of $S_{\pi}$ form a $\pi$-sequence of $S$ in any order. Hence, if $a_0, a_1, \ldots, a_k$ is a $d$-sequence, then the first $s_{\pi}$ elements of this sequence are congruent to a unique element of $S_{\pi},$ where $s_{\pi}$ is the cardinality of the set $S_{\pi}$ and $d$ is a multiple of $\pi$. Observe that for any polynomial $h(x) \in R[x]$

  \begin{equation*}
  \begin{split}
  \tfrac{h}{\pi} \in  \mathrm{Int}(S,R  )   & \Leftrightarrow  \tfrac{h}{\pi}(S_{\pi}) \subset R,\\
 & \Leftrightarrow  \tfrac{h}{\pi}(a_i) \in R\ \forall\ 0 \leq i \leq s_{\pi}.
  \end{split}
  \end{equation*}

Also, if $ \pi$ and $ \pi'$ are two different irreducibles, then

$$\tfrac{h}{\pi}\ \mathrm{ and }\ \tfrac{h}{\pi'}\ \mathrm{ are\ members\ of  }\    \mathrm{Int}(S,R  ) \Leftrightarrow \tfrac{h}{\pi \pi'} \in  \mathrm{Int}(S,R  ) . $$

 In particular, this argument can be applied to the polynomial    $  \tfrac{g}{d} .$ This  completes  the proof.

  \end{proof}
  
  Sometimes this lemma may reduce  so much calculation. For instance, see Ex. (\ref{ex}). Now we prove our main theorem.

  \begin{theorem}\label{mainth}  Let $f = \tfrac{g}{d} \in \mathrm{Int}(S,R  ) $ be  a polynomial  of degree $k$ and $a_0, a_1, \ldots, a_k$ be a $d$-sequence. Then $f$ is irreducible iff the following holds:

for any factorization $g=g_1g_2$    and    a  divisor $\pi$  of $d$ such that $e_k$ is the maximum integer satisfying  $\pi^{e_k} \mid g_1(a_i)\ \forall\ 0 \leq i \leq \deg(g_1) , $   there exists an integer $j$  satisfying  $ 0 \leq j \leq \deg(g_2)  $ and    ${w_{\pi}(\tfrac{d}{\pi^{e_k}})}   \nmid  g_2(a_j).$
  
  \end{theorem}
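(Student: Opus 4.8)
The plan is to convert a factorization of $f$ in $\mathrm{Int}(S,R)$ into a factorization $g = g_1 g_2$ of the numerator together with a splitting $d = d_1 d_2$ of the denominator, and then to analyze the splitting one prime at a time via Lemma \ref{ivpcr}. For the structural step, note that $\mathrm{Int}(S,R) \subset \K[x]$ and $\K[x]$ is a UFD, so any factorization $f = f_1 f_2$ merely partitions the $\K[x]$-irreducible factors of $f$, which coincide with those of $g$. Writing each $f_i = g_i/d_i$ in lowest terms, I would show the content can be normalized so that $g_1 g_2 = g$ and $d_1 d_2 = d$ with each $d_i \mid d$ in $R$. The image-primitivity hypothesis rules out the degenerate case of a non-unit constant factor: were $g_2$ a non-unit constant $\delta$, then $\delta$ --- coprime to $d$ since $\gcd(\mathrm{cont}(g), d)$ is a unit --- would divide every value of $f$, contradicting image primitivity. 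Thus nontrivial factorizations of $f$ correspond to factorizations $g = g_1 g_2$ with $\deg g_1, \deg g_2 \ge 1$ admitting a valid denominator split.

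Next I would apply Lemma \ref{ivpcr} to each factor. Since $d_i \mid d$, $\deg g_i \le k$, and $a_0, \dots, a_k$ is a $d$-sequence, the lemma reduces the membership $g_i/d_i \in \mathrm{Int}(S,R)$ to the finitely many conditions $d_i \mid g_i(a_\ell)$ for $0 \le \ell \le \deg g_i$. Passing to $\pi$-adic valuations for each prime $\pi \mid d$, this reads $v_\pi(d_i) \le \min_{\ell \le \deg g_i} v_\pi(g_i(a_\ell))$. In particular, the largest power of $\pi$ admissible in $d_1$ is exactly $\pi^{e_k}$, where $e_k$ is the maximal integer with $\pi^{e_k} \mid g_1(a_\ell)$ for all $\ell \le \deg g_1$ --- precisely the $e_k$ of the statement.

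Because the primes dividing $d$ are distributed independently, I would then perform the denominator assignment greedily: give $d_1$ the power $\pi^{\min(v_\pi(d),\, e_k)}$, leaving $d_2$ to absorb $w_\pi(d/\pi^{e_k}) = \pi^{v_\pi(d) - e_k}$. By the valuation criterion, $d_2$ can absorb this exactly when $w_\pi(d/\pi^{e_k}) \mid g_2(a_j)$ for all $0 \le j \le \deg g_2$. Hence a factorization $g = g_1 g_2$ lifts to a proper factorization of $f$ if and only if this divisibility holds at every $\pi \mid d$. Negating, $f$ is irreducible exactly when, for every factorization $g = g_1 g_2$, there is some $\pi \mid d$ and some $j \le \deg g_2$ with $w_\pi(d/\pi^{e_k}) \nmid g_2(a_j)$, which is the asserted criterion.

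The step I expect to be the main obstacle is the structural reduction of the first paragraph: making rigorous the passage from an abstract factorization $f = f_1 f_2$ to a clean factorization $g = g_1 g_2$ with an integral split $d = d_1 d_2$. One must bookkeep the content of $g$ and the $\K$-scalars attached to the primitive parts of $f_1$ and $f_2$, and check that the representation can always be normalized with $d_i \in R$ and $d_1 d_2 = d$. The fact that $\gcd(\mathrm{cont}(g), d)$ is a unit is what saves the argument: the primes dividing $d$, the only ones entering the criterion, never interact with the content, so the content can be absorbed into degree-zero bookkeeping without affecting the valuation analysis. Once this reduction and the exclusion of the spurious constant factorizations are settled, the prime-by-prime argument built on Lemma \ref{ivpcr} is routine.
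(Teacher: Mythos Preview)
Your approach is essentially the same as the paper's: both reduce a putative factorization $f=\tfrac{g_1}{d_1}\tfrac{g_2}{d_2}$ to the finitely many divisibility conditions $d_i\mid g_i(a_\ell)$ via Lemma~\ref{ivpcr} and then analyse the split of $d$ one prime $\pi$ at a time, with $e_k$ governing the maximal power of $\pi$ assignable to $d_1$. Your structural reduction (normalizing $g_1g_2=g$, $d_1d_2=d$, and excluding constant factors by image primitivity) is handled more explicitly than in the paper, which simply assumes a factorization of $f$ comes in the form $\tfrac{h_1}{d_1}\tfrac{h_2}{d_2}$; otherwise the arguments coincide.
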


 \begin{proof} For a given  polynomial $f = \tfrac{g}{d} \in \mathrm{Int}(S,R  ) $, suppose for every factorization    $g=g_1g_2$ 
 there exists a divisor  $\pi$  of $d$ satisfying $\pi^{e_k} \mid g_1(a_i)\ \forall\ 0 \leq i \leq \deg(g_1) $  and  $   \pi^{w_{\pi}(d)-e_k}   \nmid  g_2(a_j)$ for some non-negative integer $  j \leq \deg(g_2).$ Let us assume contrary that $f$ is reducible. Hence, there exists a factorization 
 
 $$f= \dfrac{h_1}{d_1}  \dfrac{h_2}{d_2}, $$
 
 such that $  \tfrac{h_1}{d_1}$ and $ \tfrac{h_2}{d_2}$ are members of $\mathrm{Int}(S,R  ). $ If for a divisor $\pi$ of $d$, $ w_{\pi}(d_1)=\pi^{e_k}= w_{\pi}(d) ,$ then this is contradiction to the assumption since $\pi^{0} \mid h_2(a)\ \forall\ a \in R. $ Similarly, $ w_{\pi}(d_1)$ cannot be $\pi^{0}.$  Hence we assume that $w_{\pi}(d_1)$ is a proper divisor of $ w_{\pi}(d) .$ In this case by assumption  $   \pi^{w_{\pi}(d)-e_k}   \nmid  h_2(a_j)$ for some positive integer $j$ satisfying $     j \leq \deg(h_2).$ By Lemma \ref{ivpcr} it follows that  $\tfrac{h_2}{d_2} $ cannot be a member of $\mathrm{Int}(S,R  ), $ which is again a contradiction. Hence, the polynomial must be irreducible.

 Now we assume that  $f = \tfrac{g}{d} \in \mathrm{Int}(S,R  ) $ is irreducible. For any factorization  $g=g_1g_2$ we can find suitable $d_1$ and $d_2$ such that 
 $$f= \dfrac{h_1}{d_1}  \dfrac{h_2}{d_2}, $$
 
where $  \tfrac{h_1}{d_1}$ is a member of $\mathrm{Int}(S,R )$ and $    \tfrac{h_2}{d_2}$ is not.   Since $    \tfrac{h_2}{d_2}$  does not belong to $ \mathrm{Int}(S,R ),$ hence by Lemma \ref{ivpcr} there exists a divisor $\pi$  of $d_2$, such that $ w_{\pi}(d_2)$ does not divide $ h_2(a_i)$ for some $ 0 \leq i \leq \deg(h_2) .$ Clearly  $ w_{\pi}( \tfrac{d}{d_2} )$ divides $ h_1(a_j)\ \forall\ 0 \leq j \leq \deg(h_1) $ completing the proof.


 \end{proof}

    We give some examples to illustrate our theorem.
    
    \begin{Ex} Let us check the   irreducibility of the polynomial $$ f=\frac{1}{9}(x^6+4x^5-8x^4+20x^3+4x^2+24x+27)  $$ in $\mathrm{Int}(\Z  ). $   In this case we have only the following way of factorization 
    
    $$ f=  \frac{1}{9}   ( x^3-2x^2+2x+3  ) (x^3+6x^2+2x+9)  .$$

  Here the degree of both  polynomials is three and we know that $ 0,1,2,3$ is a $d$-sequence for any integer $d$ of length three. Hence, we check the values of one polynomial at these points. Let $f_1= x^3+6x^2+2x+9,$ then one is the maximum positive integer such that $3^1 \mid f_1(i)$ for $i= 0,1,2,3.$ Taking  $f_2=x^3-2x^2+2x+3$, it can be seen that $3^{2-1}$ does not divide  $f_2(1)= 4$. Hence, the polynomial is irreducible.

      \end{Ex}  
      
      Sometimes if the factorization is known then it may be possible to predict the irreducibility of an integer-valued polynomial with a minuscule amount of information. We give an example to illustrate this.

    \begin{Ex}\label{ex} Let us test the irreducibility of the polynomial

    $$f=\frac{1}{6}  (x^{10}-22x^9+205x^8-1049x^7+3195x^6-5865x^5+6247x^4-
    3456x^3+720x^2$$ $$+18x-6) 
    $$
          in Int$(\Z)$ with $g_1= x^5-11x^4+42x^3-62x^2+26x-2 $ and  $g_2= x^5-11x^4+42x^3-63x^2+30x+3$ as known polynomials such that  $f=\tfrac{g_1g_2}{6}.$
       
    We can start with the prime $3$ since $g_1(0)=2$ is not a multiple of three. In view of Lemma \ref{l1}, we need to find a non-negative integer $0 \leq j \leq 2$ (not $0 \leq  j \leq 5$  ), such that  $ 3 \nmid    g_2(j).$ Observe that $ 3 \nmid    g_2(1)$, hence $f$ is irreducible.
    
    \end{Ex}

    In the above example we determined the irreducibility by merely checking at three points. This could be the case even if the degree of polynomial is very high. In such cases our method becomes very easy and practical.


  \section{Further generalizations}\label{secirrgen}

In the previous sections we tested the irreducibility of a polynomial $f = \tfrac{g}{d} \in \mathrm{Int}(S,D  ) $  by  using the unique factorization of the element $d \in R$. In this section, we suggest a generalization of the Theorem \ref{mainth} for some special domains. Assume the ideal generated by $d$ in a domain $D$ factors uniquely as a product of prime ideals. Then we can use the similar reasoning to get a $d$-sequence in this setting as well.  Recall  that each ideal  in a Dedekind domain factors uniquely as  a product of     prime ideals. Hence, we can generalize Theorem \ref{mainth}  to an arbitrary subset $S$ of a Dedekind domain $D$. For the sake of completeness we state the result 
(whose rigorous proof will be supplied in one of the subsequent articles) which can be proved by using  essentially the same technique.

 \begin{theorem}  Let $S$ be  an arbitrary subset of a Dedekind domain $D$ and $f = \tfrac{g}{d} \in \mathrm{Int}(S,D  ) $ be  a polynomial  of degree $k.$ If $a_0, a_1, \ldots, a_k$ is a $d$-sequence, then $f$ is irreducible iff the following holds:

for any factorization $g=g_1g_2$    and    a  prime ideal $P$  dividing $d$ such that $e_k$ is the maximum integer satisfying  $P^{e_k} \mid g_1(a_i)\ \forall\ 0 \leq i \leq \deg(g_1) , $   there exists an integer $j$  satisfying  $ 0 \leq j \leq \deg(g_2)  $ and    ${w_{P}(\tfrac{d}{P^{e_k}})}   \nmid  g_2(a_j).$
  
  \end{theorem}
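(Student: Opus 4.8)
The plan is to reduce the global statement over the Dedekind domain $D$ to finitely many local statements over discrete valuation rings, where the argument of Theorem \ref{mainth} applies almost verbatim. The two structural facts I would exploit are: (i) for every nonzero prime ideal $P$ of $D$ the localization $D_P$ is a DVR carrying a discrete valuation $v_P$, so that $P^m \mid (\alpha)$ is equivalent to $v_P(\alpha) \geq m$ and $w_P(\alpha)$ records $P^{v_P(\alpha)}$; and (ii) $D = \bigcap_P D_P$, the intersection taken over all nonzero primes. Because $d$ is a unit in $D_P$ whenever $P \nmid (d)$, fact (ii) gives $f \in \mathrm{Int}(S,D)$ iff $f \in \mathrm{Int}(S, D_P)$ for each of the finitely many primes $P_1, \ldots, P_r$ occurring in the unique factorization $(d) = P_1^{a_1} \cdots P_r^{a_r}$. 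This is precisely what lets me replace ``irreducible element $\pi \mid d$'' everywhere by ``prime ideal $P \mid (d)$.''

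First I would set up the local combinatorics. For each $P_j$ I define a $P_j$-sequence in $S$ exactly as a $\pi$-sequence was defined, but relative to $\mathrm{Int}(S, D_{P_j})$; the $d$-sequence $a_0, \ldots, a_k$ is then produced by solving the congruences $x_i \equiv u_{ij} \pmod{P_j^{e_{kj}+1}}$ simultaneously over $j$. Since distinct primes give comaximal ideals, the powers $P_j^{e_{kj}+1}$ are pairwise coprime and CRT in the Dedekind domain furnishes the $a_i$. The one identity I must record is that, for each prime $P = P_j$ with associated $P$-sequence $b_0, b_1, \ldots$ in $S$,
$$w_P\big((a_i - a_0) \cdots (a_i - a_{i-1})\big) = w_P\big((b_i - b_0) \cdots (b_i - b_{i-1})\big),$$
which holds because $a_i \equiv b_i \pmod{P^{e_{kj}+1}}$ forces equal $v_P$-valuations on the two products. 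With this in hand, the Dedekind analogue of Lemma \ref{ivpcr} --- that a polynomial $f' = g'/d'$ with $d' \mid d$ and $\deg f' \leq k$ lies in $\mathrm{Int}(S,D)$ iff $f'(a_i) \in D$ for $0 \leq i \leq \deg f'$ --- follows by running the Newton/divided-difference expansion used in Lemma \ref{ivpcr} inside each DVR $D_P$ and then intersecting over $P$.

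Then I would prove the two implications by transcribing the proof of Theorem \ref{mainth}. For the backward direction I assume the stated criterion and suppose, for contradiction, a nontrivial factorization $f = (h_1/d_1)(h_2/d_2)$ with both factors in $\mathrm{Int}(S,D)$; for the prime $P$ supplied by the criterion the valuations split additively, $v_P(d_1) + v_P(d_2) = v_P(d)$, and since neither factor can absorb the full $P$-part nor none of it, the criterion forces $P^{v_P(d) - e_k} \nmid (h_2(a_j))$ for some $j \leq \deg h_2$, contradicting $h_2/d_2 \in \mathrm{Int}(S,D)$ via the local lemma. For the forward direction, given irreducibility and a factorization $g = g_1 g_2$, I choose denominators so that $h_1/d_1 \in \mathrm{Int}(S,D)$ while $h_2/d_2 \notin \mathrm{Int}(S,D)$, apply the local lemma to locate a prime $P$ and index $i$ with $w_P(d_2) \nmid (h_2(a_i))$, and read off that $w_P(d/P^{e_k})$ fails to divide $g_2(a_j)$ for that $j$, which is exactly the criterion.

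The hard part will be the honest bookkeeping hidden in ``choose denominators,'' since in a general Dedekind domain ideals need not be principal and one cannot simply write $d = d_1 d_2$ realizing a prescribed prime-ideal distribution. I would circumvent this exactly as the localization strategy suggests: the factorization $g = g_1 g_2$ is carried out over the PID $\K[x]$ and normalized one prime $P$ at a time inside the DVR $D_P$ --- where contents are principal and Gauss's lemma behaves as over a UFD --- and the finitely many local normalizations are then amalgamated by CRT into a single global factorization in $\mathrm{Int}(S,D)$. Making that simultaneous normalization consistent across all of $P_1, \ldots, P_r$ at once is the only place where more than a routine translation of the UFD proof is needed, and it is the step I expect to demand the most care in the full write-up.
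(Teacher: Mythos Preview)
The paper does not actually supply a proof of this theorem: it states the result and then explicitly defers the argument, writing that the ``rigorous proof will be supplied in one of the subsequent articles'' and that it ``can be proved by using essentially the same technique'' as Theorem~\ref{mainth}. So there is no proof in the paper against which to compare yours; the only thing to check is whether your sketch is consistent with the paper's claim that the UFD argument carries over.

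On that score your proposal is exactly what the paper has in mind and goes further in one useful respect. Your reduction to the finitely many DVRs $D_{P_j}$ with $P_j \mid (d)$, the definition of $P$-sequences, the CRT construction of the $d$-sequence, the valuation identity, and the Dedekind analogue of Lemma~\ref{ivpcr} are the natural transcriptions of the UFD machinery, and the two implications then follow the proof of Theorem~\ref{mainth} line by line. The point you flag at the end---that in a non-principal Dedekind domain one cannot in general realize a prescribed splitting $(d) = \mathfrak{a}_1 \mathfrak{a}_2$ by elements $d_1, d_2 \in D$, so the ``choose suitable $d_1, d_2$'' step of the UFD proof needs a genuine substitute---is precisely the obstruction the paper glosses over, and your proposed fix (normalize one prime at a time in the DVR $D_P$ and amalgamate) is a sound way to handle it. That last step is indeed where the write-up will need the most care, but it is a bookkeeping issue rather than a conceptual gap.
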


\medskip

For a given subset $S$ of a domain $D$,  Theorem \ref{mainth} can be applied very easily to test the irreducibility of any   polynomial $f = \tfrac{g}{d} \in \mathrm{Int}(S,D  )  $, if $d$ has a unique factorization (into irreducibles or prime ideals).  For instance, when $D$ is a quotient of a Dedekind domain, we can rely on the Theorem \ref{mainth}. 
In the case when the ideal generated by $d$ factors uniquely into prime ideals, we must assume that the underlying domain is Noetherian since we can factor $d$ as a finite product of irreducibles in this case.  
In conclusion, when `$d$' has a unique factorization, Theorem \ref{mainth} remains valid for an arbitrary subset $S$ of a domain $D$. For instance, we have the following corollary

\begin{corollary}\label{cor} Let  $S$ be an arbitrary subset  of a domain $D$ and $f = \tfrac{g}{d} \in \mathrm{Int}(S,D  ) $ be  a polynomial  of degree $k$ where $d$ is an irreducible element. Let  $a_0, a_1, \ldots, a_k$ be a $d$-sequence,  then $f$ is irreducible iff the following holds:

for any factorization $g=g_1g_2$  there exist  integers $i,j$  satisfying  $ 0 \leq i \leq \deg(g_1)$    and  $ 0 \leq j \leq \deg(g_2)$ such that  $w_{d}(g_1(a_i)) =w_{d}(g_2(a_j))=d^0.$

\end{corollary}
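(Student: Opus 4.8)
The plan is to deduce the corollary directly from Theorem \ref{mainth} by specializing to an irreducible denominator. Since $d$ is irreducible it possesses, trivially, a unique factorization into irreducibles (namely $d$ itself), so the discussion preceding the statement guarantees that Theorem \ref{mainth} is available in the domain $D$. The crucial simplification is that the only irreducible dividing $d$, up to units, is $\pi = d$; hence the quantifier ``a divisor $\pi$ of $d$'' in Theorem \ref{mainth} collapses to the single choice $\pi = d$, and moreover $w_d(d) = d^1$.

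First I would fix an arbitrary factorization $g = g_1 g_2$ and examine the exponent $e_k$, the largest integer with $d^{e_k} \mid g_1(a_i)$ for all $0 \le i \le \deg(g_1)$, splitting into the cases $e_k = 0$ and $e_k \ge 1$. When $e_k = 0$, there is by definition an index $i \le \deg(g_1)$ with $d \nmid g_1(a_i)$, that is $w_d(g_1(a_i)) = d^0$; and since $\tfrac{d}{\pi^{e_k}} = d$, the requirement $w_d(\tfrac{d}{\pi^{e_k}}) \nmid g_2(a_j)$ of Theorem \ref{mainth} reads $d \nmid g_2(a_j)$, i.e. $w_d(g_2(a_j)) = d^0$. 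Thus in this case the criterion of Theorem \ref{mainth} holds for the factorization exactly when both $g_1$ and $g_2$ attain a value coprime to $d$ on the prescribed initial segments of the $d$-sequence, which is precisely the symmetric condition asserted in the corollary.

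Next I would treat $e_k \ge 1$. Here $d \mid g_1(a_i)$ for every $i \le \deg(g_1)$, so no index $i$ satisfies $w_d(g_1(a_i)) = d^0$ and the corollary's condition fails for this factorization. On the theorem side $\tfrac{d}{\pi^{e_k}}$ is a unit (when $e_k = 1$) or carries no positive power of $d$ (when $e_k \ge 2$), so $w_d(\tfrac{d}{\pi^{e_k}}) = d^0$ divides every $g_2(a_j)$; hence no admissible $j$ exists and the criterion of Theorem \ref{mainth} also fails. Both criteria therefore flag the same factorization as an obstruction to irreducibility. Combining the two cases shows that, factorization by factorization, the criterion of Theorem \ref{mainth} is met if and only if the symmetric coprimality condition of the corollary is met; quantifying over all factorizations $g = g_1 g_2$ then yields the stated equivalence, since $f$ is irreducible precisely when every such factorization satisfies its criterion.

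I expect the main point requiring care to be the correct reading of $w_d(\tfrac{d}{\pi^{e_k}})$ when $e_k \ge 1$: one must interpret it uniformly as $d^0$, a unit dividing everything, so that the failure of the theorem's divisibility requirement lines up exactly with the failure of the corollary's condition regardless of whether $e_k = 1$ or $e_k \ge 2$. A secondary point is to make explicit that the asymmetry of Theorem \ref{mainth}---where $g_1$ fixes $e_k$ and $g_2$ is subsequently tested---genuinely symmetrizes once $d$ is irreducible, because the only favorable value $e_k = 0$ already forces $g_1$ to take a value coprime to $d$, after which the surviving requirement on $g_2$ is its mirror image.
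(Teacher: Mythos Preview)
Your proposal is correct and follows exactly the route the paper intends: the corollary is stated without proof as an immediate specialization of Theorem~\ref{mainth} to the case where $d$ has the trivial unique factorization $d = d$, so the only prime divisor is $\pi = d$ and $w_d(d) = d^1$. Your case split on $e_k = 0$ versus $e_k \ge 1$, together with the observation that $w_d(d/\pi^{e_k}) = d^0$ divides everything once $e_k \ge 1$, is precisely the unpacking needed to see that the asymmetric criterion of Theorem~\ref{mainth} collapses to the symmetric coprimality condition of the corollary.
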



In practice, whenever $d$ is irreducible (or `square-free') Lemma \ref{l1} can be used to get more sharper results. Hence, results similar to Corollary \ref{cor} can be improved further.

\medskip

In conclusion, we would like to emphasize that this article is an initial step to test the irreducibility of  integer-valued polynomials over   arbitrary subsets of a domain. We believe that  concepts similar to $d$-sequences  would be helpful in testing the   irreducibility of  integer-valued polynomials over arbitrary subsets of a domain in near future.  This seems   a very fertile area of research, which has not been explored   so far.

    \section*{Acknowledgement}  We   thank Dr. A. Satyanarayana Reddy and Dr. Krishnan Rajkumar for their helpful suggestions.   We also thank the referee for a careful reading of the manuscript and giving several
suggestions to correct misprints and improve readability.

\end{document}